\newcommand{\lgra}{\longrightarrow}
\newcommand{\R}{\mathbb R}%
\newcommand{\C}{\mathbb C}%
\renewcommand{\H}{\mathbb H}%
\newtheorem{defn}{Definition}[section]
\newtheorem{thm}[defn]{Theorem}
\newtheorem{lem}[defn]{Lemma}
\newtheorem{prop}[defn]{Proposition}
\newtheorem{cor}[defn]{Corollary}
\newcommand{\be}{\begin{equation}}
\newcommand{\ee}{\end{equation}}
\begin{document}
\title[Immersions in Quaternionic Grassmannians]{Immersions in a Quaternionic Grassmannian\\ inducing a given 4-form}
\author[M. Datta]{Mahuya Datta}
\address{Statistics and Mathematics Unit, Indian Statistical Institute\\ 203,
B.T. Road, Calcutta 700108, India.\\ e-mail:
mahuya@isical.ac.in}
\keywords{Connections, Symplectic Pontrjagin forms, Quaternionic Grassmannians, Immersions}
\thanks{2010 Mathematics Subject Classification:  53C07, 57R42, 58A10, 58B05, 58J99.}
\begin{abstract} Let $Gr_k(\H^n)$ be the Grassmannian manifold of Quaternionic $k$-planes in $\H^n$ and let $\gamma^n_k\to Gr_k(\H^n)$ denote the Stiefel bundle of quaternionic $k$-frames in $\H^n$. Let $\sigma$ denote the first symplectic Pontrjagin form associated with the universal connection on $\gamma^n_k$. We show that every 4-form $\omega$ on a smooth manifold $M$ can be induced from $\sigma$ by a smooth immersion $f:M\to Gr_k(\H^n)$ (for sufficiently large $k$ and $n$) provided there exists a continuous map $f_0:M\to Gr_k(\H^n)$ which pulls back the cohomology class of $\sigma$ onto that of $\omega$.
\end{abstract}
\maketitle
\section{Introduction}
In \cite{datta2} we proved that the complex Grassmannians $Gr_k(\C^n)$ admit some even degree differential forms $\sigma_i$ of degree $2i$ which are universal. This means that any closed differential $2i$-form $\omega$ on a manifold $M$ can be obtained as the pullback of $\sigma_i$ by an immersion $f:M\to Gr_k(\C^n)$ (for sufficiently large $n$) provided there is a continuous map $f_0:M\to Gr_k(\C^n)$ which pulls back the deRham cohomology class of $\sigma_i$ onto that of $\omega$. The immersion obtained in this case is homotopic to $f_0$. The result for 2-forms were known for some time.  Tishcler and Gromov proved independently that the complex projective spaces with the first Chern forms are the universal object. Our theorem genralised these results for even degree forms.

In this paper we prove that there exists a 4-form on the quaternionic Grassmannian $Gr_k(\H^n)$ which is also universal in the above sense. This 4-form is defined as the first symplectic Pontrjagin form $p$ of the universal connection on the Stiefel bundle $\gamma^n_k\to Gr_k(\H^n)$. The main result may be stated as follows.
\begin{thm}Let $M$ be a closed manifold of dimension $m$ and let $\omega$ be a closed 4-form on it. Suppose that there exists a continuous map $f_0:M\to Gr_k(\H^n)$ which pulls back the cohomology class of $p$ onto that of $\omega$. Then there exists a smooth immersion $f:M\to Gr_k(\H^n)$ such that $f^*p=\omega$, provided $k\geq$ and $n\geq $.\label{main}
\end{thm}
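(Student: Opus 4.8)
The plan is to translate the statement into a question about connections and then solve it in three stages: a cohomological reduction to connection data, the exact realization of $\omega$ as a characteristic form, and a final upgrade of the realizing map to an immersion. Throughout, the tools I expect to use are the Narasimhan--Ramanan universal connection, Chern--Weil transgression, and the Hirsch--Gromov $h$-principle for immersions.

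First I would recall that a smooth map $f:M\to Gr_k(\H^n)$ pulls the universal connection on $\gamma^n_k$ back to a connection $\nabla_f$ on $f^*\gamma^n_k$, and that by naturality $f^*p$ is precisely the first symplectic Pontrjagin form of $\nabla_f$. Conversely, the Narasimhan--Ramanan theorem (in its symplectic version) says that, once $n$ is large compared with $\dim M$, every connection on every quaternionic $k$-plane bundle over $M$ is $\nabla_f$ for some such $f$, which may moreover be taken homotopic to a prescribed classifying map. Thus the problem becomes: on the bundle $E:=f_0^*\gamma^n_k$, produce a connection $\nabla$ whose first symplectic Pontrjagin form equals $\omega$ on the nose, realized by a classifying map that is in addition an immersion.

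Second I would realize $\omega$ exactly. Writing $\nabla_0=\nabla_{f_0}$, the hypothesis gives $\omega-p(\nabla_0)=d\beta$ for some $3$-form $\beta$, since the two forms are cohomologous. As $p$ is a quadratic invariant of the curvature, replacing $\nabla_0$ by $\nabla_0+A$ changes $p$ by $d\,T(A)$, where $T(A)$ is the associated transgression $3$-form, polynomial in $A$ and $F_{\nabla_0}$. It therefore suffices to solve $T(A)=\beta$ modulo closed forms for a connection $1$-form $A$ with values in the adjoint bundle. To leading order this is the linear equation $\mathrm{tr}_{sp}(A\wedge F_{\nabla_0})=\beta$, solvable once $F_{\nabla_0}$ is sufficiently nondegenerate, the higher-order terms being absorbed by a rescaling argument. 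Nondegeneracy is arranged by first stabilizing, replacing $E$ by $E\oplus\underline{\H}^{\,l}$ and enlarging $k$, which supplies enough independent components of $A$. This step, with its attendant solvability lemma, is where I expect the bulk of the work to lie.

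Finally I would upgrade the realizing map $g$ to an immersion without disturbing $g^*p=\omega$. Because the first symplectic Pontrjagin form is additive under direct sums and vanishes for a connection with isotropic curvature, I would equip the trivial summand of $E\oplus\underline{\H}^{\,l}$ with a connection whose form vanishes \emph{pointwise} but whose classifying directions are free to move, so that the total form is still $\omega$. Since $\dim Gr_{k}(\H^{n})=4k(n-k)$ exceeds $2m$ once $k$ and $n$ are large, immersions are generic and satisfy the $h$-principle, so this extra freedom can be used to make the combined classifying map an immersion homotopic to $f_0$. The principal obstacle, and the reason for the lower bounds on $k$ and $n$, is exactly this decoupling: one must perform the immersion-producing perturbation in directions invisible to $p$ while keeping the transgression equation solvable, and verifying that enough such Pontrjagin-trivial directions are available is the heart of the argument.
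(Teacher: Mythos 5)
Your overall architecture matches the paper's: reduce to realizing $\omega$ as the first symplectic Pontrjagin form of a connection on $f_0^*\gamma^n_k$, then classify that connection by a map to $Gr_k(\H^n)$ via the Schlafly/Narasimhan--Ramanan universal connection. But your second stage --- the heart of the proof --- has a genuine gap. You propose to solve the transgression equation by linearizing around $\nabla_0$, requiring $\mathrm{tr}(A\wedge F_{\nabla_0})=\beta$ to be solvable ``once $F_{\nabla_0}$ is sufficiently nondegenerate,'' and you claim nondegeneracy is ``arranged by stabilizing, replacing $E$ by $E\oplus\underline{\H}^{\,l}$.'' This does not work as stated: a trivial summand carrying the trivial (or any flat) connection contributes \emph{zero} curvature, so it adds nothing to the linearized operator. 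What is actually needed is a connection on the trivial summand whose curvature components span $\Lambda^2 T^*M$ at every point, and constructing one is precisely the technical core of the paper: Proposition~\ref{large} produces maps $f_i:M\to\R^2$ whose pullbacks $f_i^*(dx\wedge dy)$ span $\Lambda^2(T^*M)$ when $q\geq m(m+1)/2$, via an irreducibility lemma for the polynomials of $2\times 2$ minors and the Thom transversality theorem. You have assumed away exactly this step. Relatedly, your ``rescaling argument'' to absorb the higher-order terms is not a substitute for what the paper uses: the operator loses derivatives, so surjectivity of the linearization only yields openness of the image through Gromov's implicit function theorem for infinitesimally invertible operators; global surjectivity then comes from starting at a regular tuple with $\sum\omega_i\wedge d\omega_i=0$ (Proposition~\ref{decomposition0}) and exploiting the quadratic homogeneity of the operator, not from a perturbative rescaling.

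There is also a structural difference worth noting. The paper never perturbs the given connection on $E$ at all: it splits $E\cong P_1\oplus P_2$ with $P_2$ trivial of large rank, invokes additivity of $p_1$, and puts the entire correction on $P_2$ using \emph{diagonal} $\mathfrak{sp}(1)^q$-valued connections, which reduces the matrix-valued transgression equation to the scalar problem of writing an exact $4$-form as $\sum_i(d\omega_i)^2$. Your matrix-valued formulation, even if repaired, would be substantially harder to analyze. Finally, your $h$-principle step for the immersion is risky as described: a generic perturbation making $f$ an immersion will change $f^*p$, and you would need to verify that the ``Pontrjagin-trivial directions'' you invoke actually exist and suffice; the paper instead gets the classifying map directly from the universal-connection construction.
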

On our way to achieving this goal we also prove the following result.
\begin{thm} Let $P$ be an $Sp(k)$-bundle over a closed manifold $M$ and $\omega$ a 4-form on $M$ representing the first Symplectic Pontrjagin class of $P$. Then there is a connection $\alpha$ on $P$ such that the symplectic Pontrjagin form of $\alpha$ is $\omega$, provided $k\geq $.\label{primary}
\end{thm}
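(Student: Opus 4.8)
The plan is to realize $\omega$ by deforming an arbitrary initial connection, converting the cohomological hypothesis into a solvable first-order equation for the deformation by way of the Chern--Simons transgression. First I would fix any connection $\alpha_0$ on $P$ and set $p_0=p(\alpha_0)$; this is a closed $4$-form representing the first symplectic Pontrjagin class, so by hypothesis $\omega-p_0$ is exact, say $\omega-p_0=d\beta$ for some $3$-form $\beta$. Writing a competing connection as $\alpha=\alpha_0+a$, where $a$ is a $1$-form valued in the adjoint bundle $\mathrm{ad}\,P$, and letting $Q$ denote the defining invariant quadratic polynomial and $\tilde Q$ its polarization, the transgression formula reads
\[
p(\alpha)-p_0=d\,T(a),\qquad T(a)=2\tilde Q(a,\Omega_0)+\tilde Q(a,d_{\alpha_0}a)+\frac{1}{3}\,\tilde Q(a,[a,a]),
\]
with $\Omega_0$ the curvature of $\alpha_0$. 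Hence it suffices to find $a$ with $T(a)=\beta$ (modulo a closed form), for then $d\,T(a)=\omega-p_0$ and $p(\alpha)=\omega$ as required.

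The core problem is therefore to solve the underdetermined equation $T(a)=\beta$. The linearisation of $T$ at $a=0$ is the purely algebraic map $a\mapsto 2\tilde Q(a,\Omega_0)$, which is degenerate (it vanishes wherever $\Omega_0$ does) and far from surjective, so no linear inverse-function argument applies; the genuinely first-order nonlinear term $\tilde Q(a,d_{\alpha_0}a)$ must drive the construction. I would handle this by Gromov's convex integration, feeding in a rapidly oscillating $a$ whose amplitude takes values in $\mathrm{ad}\,P$. To leading order in the frequency the derivative $d_{\alpha_0}a$ behaves like $\xi\wedge a_{\mathrm{osc}}$, so the quadratic term averages to antisymmetrised expressions of the form $\tilde Q(u,\xi\wedge v)$ in the oscillation amplitudes $u,v$; by superposing finitely many oscillation directions $\xi$ one targets $\beta$ component by component. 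The cohomological hypothesis supplies the formal ($0$-jet) solution that seeds the scheme, and iterating produces a genuine $a$, whence the desired connection $\alpha=\alpha_0+a$.

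The main obstacle is the pointwise, algebraic step underlying convex integration: one must show the relation is \emph{ample}, i.e.\ that the set of principal parts
\[
\{\,\tilde Q(u,\xi\wedge v):u,v\in\mathfrak{sp}(k)\,\}
\]
has convex hull equal to the full space of $3$-form directions at each point of $M$. This is precisely where the lower bound on $k$ enters: since $\tilde Q$ is built from the trace on $\mathfrak{sp}(k)$, the pairings $\tilde Q(u,v)$ can be prescribed arbitrarily once $\dim\mathfrak{sp}(k)$ is large relative to $m=\dim M$, so that every local $3$-form lies in the image. Establishing this surjectivity with an explicit bound on $k$, and verifying that the oscillatory corrections do not spoil the lower-order part already matched, is the technical heart of the argument; the remaining globalisation over $M$ is routine once the relation is shown to be ample and open.

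I expect the ampleness estimate to be the decisive difficulty, and I would prove it by an explicit choice of commuting and anticommuting pairs in $\mathfrak{sp}(k)$ realising the standard symplectic blocks, counting how many independent $3$-form directions each block contributes and comparing with $\binom{m}{3}$; this comparison is exactly what forces the bound `$k\geq$' recorded in the statement.
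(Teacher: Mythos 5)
Your reduction via the transgression form is fine and broadly parallels the paper's use of the secondary characteristic form: fix $\alpha_0$, write $\omega-p(\alpha_0)=d\beta$, and try to realise $\beta$ (mod closed forms) as a transgression. But the mechanism you propose for solving $T(a)=\beta$ --- convex integration driven by rapid oscillation --- has a genuine gap, in two respects. First, $T(a)$ is \emph{affine} in the first derivatives of $a$: for fixed $a(x)$ and fixed derivatives in all but one direction $\xi$, the solution set in the remaining derivative $\partial_\xi a$ is an affine subspace, whose convex hull is itself. So the relation is not ample in Gromov's sense (an affine subspace is ample only if it is empty or everything), and the condition you propose to verify --- surjectivity of $(u,v)\mapsto\tilde Q(u,\xi\wedge v)$ --- is not the ampleness condition anyway, since $u$ and $v$ are not independent: both are the value of the \emph{same} field $a$ at the point. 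Second, and more concretely, the oscillatory ansatz produces no net contribution from the quadratic term: with $a=a_0+\epsilon\,u\,\phi(\lambda\langle x,\xi\rangle)$ the leading term of $\tilde Q(a,d_{\alpha_0}a)$ is $\epsilon^2\lambda\,\phi\phi'\,\tilde Q(u,\xi\wedge u)$, and $\int\phi\phi'=\tfrac12[\phi^2]$ vanishes over a period. This is exactly the opposite of the isometric-immersion situation, where the relevant term is quadratic in the derivative ($\phi'^2$, positive mean); here it is a total derivative and averages to zero, so superposing oscillations cannot target $\beta$.

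The paper avoids this by a different route: it first reduces $P$ stably to $P_1\oplus P_2$ with $P_2$ a trivial $Sp$-summand, uses additivity of $p_1$ to push the whole exact discrepancy onto $P_2$, and on the trivial summand restricts to (block-)diagonal connections so that the problem becomes the scalar decomposition $\sum_i(d\omega_i)^2=d\alpha$, equivalently $\sum_i\omega_i\wedge d\omega_i+d\phi=\alpha$. The key point is that one does \emph{not} linearise at $a=0$ (where, as you correctly note, the linearisation degenerates) but at a \emph{regular} tuple $(\omega_1,\dots,\omega_q)$ with the $d\omega_i$ spanning $\Lambda^2T^*M$ pointwise: there the linearisation is, modulo exact forms, the purely algebraic and surjective map $(\alpha_i)\mapsto 2\sum d\omega_i\wedge\alpha_i$, so the operator is infinitesimally invertible and the Nash--Gromov implicit function theorem gives openness of the image; existence of a regular tuple with $\sum\omega_i\wedge d\omega_i=0$ (a genericity/transversality argument, which is where the bound $q\geq m(m+1)/2$ and hence the bound on $k$ actually comes from) together with the homogeneity $\bar{\mathcal D}(\lambda\omega,\lambda^2\phi)=\lambda^2\bar{\mathcal D}(\omega,\phi)$ then yields surjectivity. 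If you want to salvage your outline, replace the convex-integration step by this infinitesimal-invertibility-at-a-regular-point argument; as written, the analytic core of your proof does not go through.
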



The Stiefel bundle $\gamma^n_k\to Gr_k(\H^n)$ is a universal $Sp(k)$-bundle which admits a universal connection \cite{schlafly}. This helps us to show that the statements of Theorem~\ref{main} and Theorem~\ref{primary} are equivalent.
In order to prove Theorem~\ref{primary} we note that if $k>m=\dim M$, then any $Sp(k)$-bundle $P$ can be reduced to a $Sp(m)\times Sp(1)^{k-m}$ bundle $P_1\times P_2$, where $P_2$ is a trivial $Sp(1)^{k-m}$ bundle. Since, the symplectic Pontrjagin form is additive the problem reduces to showing that every exact form on $M$ can be realised as the first symplectic Pontrjagin form of some connection on the trivial $Sp(1)^{k-m}$ bundle $P_2$. This is equivalent to showing a decomposition of an exact 4-form as the sum of squares $d\beta_i\wedge d\beta_i$, $i=1,2,\dots,k-m$, where each $\beta_i$ is a 1-form on $M$. In order to obtain the decomposition we consider a differential operator $\bar{\mathcal D}$ which we show to be infinitesimally invertible. We then apply an Implicit function theorem for smooth differential operators on $\bar{\mathcal D}$ to show that it is surjective. 


We also prove the following:

\begin{cor} Let $M$ be any manifold. Every 3-form on $M$ is the secondary characteristic form of a pair of connections $(\omega_0,\omega)$ on a trivial $Sp$ bundle, modulo exact forms, where $\omega_0$ is the trivial connection.

Every exact 4-form on $M$ is the symplectic Pontrjagin form of some connection on a trivial $Sp$ bundle over $M$.\end{cor}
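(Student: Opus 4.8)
Both assertions rest on the technical heart of Theorem~\ref{primary}: every exact $4$-form on $M$ is the first symplectic Pontrjagin form of some connection on a trivial $Sp(1)^N$ bundle for $N$ large. The second assertion is exactly this statement, since an exact $4$-form represents the zero de~Rham class and the zero class is the first symplectic Pontrjagin class of the trivial bundle; the sum-of-squares decomposition $d\eta=\sum_{i}d\beta_i\wedge d\beta_i$ delivered by the infinitesimal inversion of $\bar{\mathcal D}$ then produces the required connection. For a general, possibly non-closed, $M$ I would note that the infinitesimal inversion of $\bar{\mathcal D}$ is algebraic and pointwise, so after fixing a global primitive $\eta$ the construction globalizes; alternatively one exhausts $M$ by compact pieces and patches.

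For the first assertion I would invoke the transgression (Chern--Simons) formalism. For connections $\omega_0,\omega$ on the same trivial bundle the secondary symplectic Pontrjagin form $Tp(\omega_0,\omega)$ is a $3$-form, defined modulo exact forms, with
\be
d\,Tp(\omega_0,\omega)=p(\omega)-p(\omega_0).
\ee
With $\omega_0$ the trivial flat connection one has $p(\omega_0)=0$, so $d\,Tp(\omega_0,\omega)=p(\omega)$. Given any $3$-form $\tau$, its differential $d\tau$ is exact, so by the second assertion there is a connection $\omega$ on a trivial $Sp$ bundle with $p(\omega)=d\tau$; then $d\big(Tp(\omega_0,\omega)-\tau\big)=0$, i.e. $Tp(\omega_0,\omega)$ and $\tau$ differ by a \emph{closed} $3$-form $c$.

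The main obstacle is to promote this to a difference that is \emph{exact}, i.e. to arrange $[c]=0$ in $H^3(M;\R)$. The operations that obviously preserve the constraint $p(\omega)=d\tau$---gauge transformations of $\omega$ and adjoining flat summands $g^{-1}dg$---shift $[Tp(\omega_0,\omega)]$ only by the integral classes $g^{*}$(generator of $H^3(Sp(N);\R)$), which cannot cancel a general real $[c]$. To reach arbitrary real classes I would deform $\omega$ through a family $\omega_s$ with $p(\omega_s)\equiv d\tau$: differentiating the transgression identity in $s$ shows $\tfrac{d}{ds}Tp(\omega_0,\omega_s)$ is closed and, up to an exact term, equals $2\langle\dot\omega_s,F_{\omega_s}\rangle$, whose cohomology class can be steered over all of $H^3(M;\R)$ once the rank $N$ is large enough to make the curvature sufficiently generic. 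Moving $[Tp(\omega_0,\omega_s)]$ onto $[\tau]$ then forces $c$ to be exact. A cleaner alternative, which I would pursue if the above genericity proves awkward, is to bypass $d\tau$ and rerun the infinitesimal-inversion and implicit-function argument of Theorem~\ref{primary} one degree lower, directly on the cubic transgression operator $\beta\mapsto\sum_i\beta_i\wedge d\beta_i$, solving $\sum_i\beta_i\wedge d\beta_i\equiv\tau$ modulo exact forms in a single step; the essential point is again the infinitesimal invertibility of this operator, in parallel with that of $\bar{\mathcal D}$.
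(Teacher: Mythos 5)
Your handling of the second assertion matches the paper: the exact $4$-form is decomposed as $\sum_i(d\beta_i)^2$ and realised as $p_1$ of a block-diagonal connection on a trivial $Sp$ bundle, which is Proposition~\ref{exact_char} combined with Proposition~\ref{decomposition1}. The first assertion is where your proposal has a genuine gap, and the gap is created by running the logic in the wrong direction: you try to derive the $3$-form statement from the $4$-form statement via the transgression identity $d\,Tp(\omega_0,\omega)=p(\omega)$, which only determines $Tp(\omega_0,\omega)$ up to a \emph{closed} $3$-form $c$, and you then need $[c]=0$ in $H^3(M;\R)$. Your proposed fix --- deforming $\omega$ through a family with $p(\omega_s)\equiv d\tau$ and ``steering'' $[Tp(\omega_0,\omega_s)]$ over all of $H^3(M;\R)$ by making the curvature ``sufficiently generic'' --- is not an argument; you give no mechanism by which the derivative classes sweep out the whole real cohomology, and as you yourself observe, the obvious operations (gauge transformations, adjoining flat summands) only move the class within an integral lattice. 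Nothing in the paper supports this step, and without it your primary route does not close.

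The paper avoids the problem by proving the $3$-form statement \emph{first} and deducing the $4$-form statement from it by applying $d$. Note that the operator $\bar{\mathcal D}(\omega_1,\dots,\omega_q,\phi)=\sum_i\omega_i\wedge d\omega_i+d\phi$ already takes values in $\Omega^3(M)$: its surjectivity (infinitesimal invertibility at regular tuples, which exist by Proposition~\ref{large}, plus the implicit function theorem) is precisely Proposition~\ref{decomposition1}(1), i.e.\ every $3$-form equals $\sum_i\omega_i\wedge d\omega_i$ modulo an exact form, and the block-diagonal connection of Proposition~\ref{secondary} with entries $\pm i\alpha_j$ realises $\sum_j\alpha_j\wedge d\alpha_j$ as a secondary characteristic form. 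So your ``cleaner alternative'' is not an alternative but the paper's actual (and only workable) route, and there is nothing to rerun ``one degree lower'' --- $\bar{\mathcal D}$ is already the degree-$3$ operator. If you restructure the proof in that order, the transgression detour and the unresolved class $[c]$ disappear. One further caveat: the surjectivity results are proved for closed $M$, so your one-line remark about exhausting an open manifold by compact pieces is glossing over a point that the paper itself leaves unaddressed.
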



Analogues of Theorem~\ref{secondary} and Corollary~\ref{primary} were proved in \cite{datta2} for homogeneous components $ch_k$ of the Chern character $ch=\sum_{k\geq 0}\frac{1}{k!}ch_k$ of a principal $U(n)$-bundle, where $ch_k\in H_{deR}^{2k}(M, \R)$. Explicitly, we showed that if $P$ is a principal $U(n)$ bundle, then every $2k$-form representing the cohomology class $ch_k(P)$ is a Chern character form of some connection on the bundle provided $n$ is large. In other words, every form representing the characteristic class $ch_k(P)$ is a Chern character form of the stable isomorphism class $\{P\}$. The principal idea was to use the additivity property of the Chern character forms. A careful observation of the proof also shows, that every $(2k+1)$-form on $M$, modulo exact forms, can be realised as a Chern-Simons form of a pair of connections $(\omega_0,\omega)$ on $P$, for some fixed $\omega_0$. However, this is not explicitly mentioned in the article. In a recent paper\cite{sullivan}, Sullivan and Simons have shown that any odd degree form (not necessarily homogeneous) is a Chern-Simons form of some unitary connection on a trivial bundle by exploiting the multiplicative property of the Chern character forms and the partial additivity of the Chern-Simons classes. This gives a differential $K$-theory of unitary bundles. We also refer to \cite{pingali} for a similar result.

Let  $G$ be one the Lie groups $O(n), U(n)$ or $Sp(n)$. We say that $\alpha$ is a characteristic form of a principal $G$-bundle $P$, if it is the characteristic form of some connection on $P$; $\alpha$ will be called a characteristic form of a stable isomorphism class $\{P\}$ if it is a characteristics form of some $Q\in\{P\}$.

\section{Preliminaries}
Let $\H$ denote the (skew-)field of quaternions represented by elements of the form $u+x i+y j+z k$, where $u,x,y,z\in\R$ and $i,j,k$ satisfy the relations
\[i^2=j^2=k^2=-1 \ \  ij=k=-ji,\ jk=i=-kj,\ ki=j=-ik.\]
$\H$ becomes a division ring with respect to coordinatewise addition and the above multiplication. To each element $w=u+x i+y j+z k$, we can associate a conjugate $\bar{w}$ which is of the form $u-x i-y j-z k$.
The $n$-fold product of $\H$, can be given the structure of a quaternionic right (resp. left) vector space of dimension $n$ and comes with
the following canonical inner product on $\H^n$: For $w,w'\in\H^n$, the inner product $\langle w,w'\rangle=\sum_{i=1}^nw_i\bar{w'_i}$, where $w_i$, $w_i'$ denote the coordinates of $w$ and $w'$ respectively. The subgroup of $Gl_n(\H)$ consisting of all linear transformations which preserve the inner product is denoted by $Sp(n)$. This is called the Symplectic group. The quaternions can be realised as a 2-dimensional vector space over the complex numbers with basis $\{1,j\}$ by identifying the element $u+x i+y j+z k$ with the ordered tuple $(u+xi, y+zi)$.

With this identification we can identify a quaternionic vector space $V$ of dimension $n$ with complex vector space of dimension $2n$. Then the quaternionic inner product of two elements of $v,w\in V$, can be expressed as follows:
\[\langle v,w\rangle=h(v,w)+j\omega(v,w)\]
where $h(v,w)$ denotes the canonical Hermitian inner-product on $\C^{2n}$ and $\omega$ denotes the complex symplectic form on $\C^{2n}$.
This shows that the Symplectic group can also be seen as a subgroup of $U(2n)$ consisting of all $A\in U(2n)$ which preserve the complex symplectic form $\omega$. Thus $Sp(n)=U(2n)\cap Sp(2n,\C)$ and $X\in Sp(n)$ if and only if 
\[XX^*=I=X^*X \ \ \ \mbox{ and } \ \ \  X^tJX=J,\] 
where $X^*$ denote the adjoint of $X$ and $J$ is of the form \[\left(\begin{array}{cc}0 & I_n\\-I_n & 0\end{array}\right)\]
Therefore the Lie algebra ${\mathfrak sp}_n$ of $Sp(n)$ can be identified with the subalgebra of all skew-Hermitian $2n\times 2n$ matrices satisfying $Y^tJ+JY=0$.

\subsection{Universal connections on Stiefel bundles} Let $Gr_k(\H^n)=Sp(n)/Sp(k)\times Sp(n-k)$ be the Grassmannian manifold of quaternionic $k$-subspaces in $\H^n$ and $p:\gamma^n_k\to G_k(\H^n)$ denote the Stiefel bundle of orthonormal $k$-frames in $\H^n$. This is a principal $Sp(k)$ bundle over $G_k(\H^n)$. The projection map $p$ maps a $k$-frame onto the subspace of $\H^n$ spanned by these vectors.
$G_k(\H^n)$ is the classifying space of all principal $Sp(k)$-bundles over manifolds of dimension $m$, provided $n\geq $. This means that every principal $Sp(k)$ bundle over a manifold of dimension $m$ can be obtained as a pullback bundle of $\gamma^n_k$ via some map $f:M\to G_k(\H^n)$. Indeed, there is a one-to-one bijection from the set homotopy classes of continuous maps $[M, G_k(\H^n)]\to {\mathcal V}^k(M)$, where $\mathcal V^k(M)$ denote the set of isomorphism classes of principal $Sp(k)$ bundles over $M$, provided $n$ is sufficiently large.
Following the work of Narasimhan and Ramanan \cite{nara} on unitary bundles, Schlafly constructs a connection $\omega_0$ on $\gamma^n_k$,  and proves that every connection on a principal $Sp(k)$ bundle over a manifold $M$ of dimension $m$ can be induced from $\omega_0$ by some map $f:M\to Gr_k(\H^n)$, provided $n\geq k(m+1)(4mk^2+2mk+1)$. We shall refer to this connection $\omega_0$ as universal connection on $\gamma^n_k$.

\subsection{Symplectic Pontrjagin Classes}
It is a general fact that two principal bundles over $M$ which are stably isomorphic, have the same characteristic classes. The characteristic classes, we may recall, are certain cohomology classes in $H^*(M,\R)$ which are invariants of vector bundles. The geometric theory of characteristic classes of a principal $G$-bundle $P\to M$ associates a cohomology class to an invariant polynomial on the lie algebra $\mathfrak g$ of the Lie group $G$.  Evaluating the polynomial on the curvature forms of connections on $P$ we get the representatives of the characteristic class. Each such form is referred as a characteristic form of the bundle $P$. A differential form $\omega$ will be called a characteristic form of a stable isomorphism class $\{P\}$ if it is the characteristic form of some $Q$ in the stable isomorphic class of $P$.

We shall now describe the Symplectic Pontrjagin classes of principal $Sp(k)$ bundles. Given any $Sp(k)$ bundle $P\to M$ we can associate to it some cohomology classes lying in $H^{4i}(M,\R)$ which are topological invariants of the principal bundle. These are called symplectic Pontrjagin classes. The Chern Weil theory says that each $Sp(k)$-invariant polynomial $f:{\mathfrak sp}(k)\to \R$ of even degree $2i$ defines a cohomology class in $H^{4i}(M,\R)$ which is an invariant of the bundle. The generators of the ring of all invariant polynomials can be obtained as follows.
For $X\in{\mathfrak sp}(k)$, we expand $\det(\lambda I_{2k}+iX)$ to write it as a polynomial in $\lambda$:
\[\lambda^{2k}-f_1(X)\lambda^{2k-2}+f_2(X)\lambda^{2k-4}+\dots\]
First note that there can not be any odd degree terms in the above expression because of the following simple reason. Since $X$ is a Lie algebra element it satisfies the relation $JX+X^tJ=0$. Therefore,
\[\begin{array}{rcl}\det(\lambda I_{2k}+iX)& = & \det(\lambda I_{2k}+iX)^t\\
& = & \det(\lambda I_{2k}+iX^t)\\
& = & \det(\lambda I_{2k}-iJXJ^{-1})\\
& = & \det(\lambda I_{2k}-iX).\end{array}\]
Consequently, the coefficients of odd powers of $\lambda$ vanish.
It can be proved that the polynomials $f_1, f_2, \dots$ generate the space of all invariant polynomials.

Let $P\to M$ be a principal $Sp(k)$-bundle, and let Ad\,$P$ denote the vector bundle $P\times_G{\mathfrak g}$ over $M$, where $G$ action on $\mathfrak g$ is the adjoint action. The space of $Sp(k)$-connections on $P$ is an affine space which may be parametrised by the space of Ad\,$P$-valued 1-forms on $M$. This means that if we fix a connection $\omega$ then any connection can be expressed as $\omega+\alpha$ for some section  $\alpha$ of $T^*(M)\otimes \mbox{Ad}\,(P)$ and conversely. The curvature form $D\omega$ of $\omega$ is defined as $D\omega=d\omega+\omega\wedge\omega$. We shall often denote the curvature by $\Omega$. Now consider an $Sp(k)$-connection $\omega$ on the principal bundle $P$ and substitute its curvature form $D\omega$ in the polynomial $f_i$ for $X$. Then, it gives a $4i$-form on $P$ which projects onto a closed $4i$-form on $M$. We shall denote this form by $p_i(\omega)$ and call it the $i$-th symplectic Pontrjagin form of $\omega$. The cohomology class of $p_i(\omega)$ is independent of the choice of the connection. We shall denote the cohomology class by $p_i$ and we refer to it as the symplectic Pontrjagin class the bundle $P$. 

We now restrict our attention to only the first symplectic Pontrjagin form $p_1$.
It may be shown that \[p_1(\omega+\alpha)=p_1(\omega)+d\int_0^1\mbox{trace }\alpha\wedge D(\omega+t\alpha) dt.\]
The 3-form on $P$ defined by the integral $\int_0^1 \mbox{trace }\alpha\wedge D(\omega+t\alpha)\,dt$ projects onto $M$. We shall denote the projected form on $M$ by $\bar{p}_1(\omega, \omega+\alpha)$ and will refer to it as the secondary characteristic form. While the primary characteristic forms are only topological invariant of the bundles, the secondary characteristic forms contain important geometric information. In particular, if we consider the trivial principal bundle and take $\omega$ to be the trivial connection on it, then $\bar{p}_1(0,\alpha)=\frac{1}{2} \mbox{trace }(\alpha\wedge D\alpha)$.

We shall now show that the functor $p_1$ is additive. This means that if  $\xi$ is an $Sp(k)$ bundle and $\xi'$ an $Sp(k')$ bundle,  and $\omega$, $\omega'$ are two connections on $\xi$, $\xi'$ respectively then $p_1(\omega\oplus\omega')=p_1(\omega)+p_1(\omega')$, where $\omega\oplus\omega'$ is the induced connection on the $Sp(k+k')$-bundle $\xi\oplus\xi'$. This observation is crucial for our result (and was pointed out to me by M. Gromov). Using the description of $f_1$ we can write
\[\begin{array}{rcl}f_1(X) & = & -\sum \mbox{ principal 2-minors }\\
                           & = & - \sum_{i< j} (x_{ii} x_{jj}-x_{ij}x_{ji})\\
                           & = & (\sum_{i< j} x_{ij}x_{ji}+ \frac{1}{2}\sum x_{ii}^2)- (\sum_{i< j} x_{ii} x_{jj} + \frac{1}{2}\sum x_{ii}^2)\\
                           & = & \frac{1}{2}\left(\mbox{trace }X^2-(\mbox{trace }X)^2\right)
                           \end{array}\]
Now note that a maximal torus $\mathfrak t$ of $Sp(k)$ consists of diagonal matrices of the form
\[\left(\begin{array}{cccccc}
                         i\xi_1 &  &  &  &  &  \\
                            &  \dots &  &  &  &  \\
                            &  & i\xi_n &  &  &  \\
                            &  &  & -i\xi_n &  &  \\
                            &  &  &  &  \dots &  \\
                            &  &  &  &  & -i\xi_n \\
                       \end{array}
                     \right)\]
where $\xi_i$'s are real numbers. Therefore, if we restrict the invariant polynomial $\mbox{trace }X$ to the torus then it is identically zero. Hence it is identically zero on ${\mathfrak sp}(k)$ and so $f_1(X)=\mbox{trace }X^2$ for all $X\in{\mathfrak sp}(k)$. Consequently,
$p_1(\omega\oplus\omega')=f_1(\Omega\oplus\Omega')=f_1(\Omega\wedge\Omega)+f_1(\Omega'\wedge\Omega')=p_1(\omega)+p_1(\omega')$. This proves that $p_1$ is additive.

Let $P\to M$ be a trivial $Sp(q)$ bundle over a manifold $M$. A connection on $P$ is a $\mathfrak{sp}(q)$-valued 1-form on $M$.  If we consider a connection $\omega$ on $P$ which is a diagonal matrix with diagonal entries  $i\omega_1,\dots,i\omega_q,-i\omega_1,\dots,-i\omega_q$
then $p_1(\omega)=_{i=1}^q\sum(d\omega_i)^2$. Thus every exact form which can be expressed as the sum of squares of exact 2-forms is the first symplectic Pontrjagin form of some connection.

\section{Connections with prescribed characteristic forms}

The results of this section were proved earlier in \cite{datta1} and \cite{datta2}. For the sake of completeness we present the relevant part from there.

We first prove that every differential 4-form on $M$ can be expressed as the sum of the squares of exact forms. In order to see this we consider a differential operator $\mathcal D$ which takes a $q$-tuple of $1$-forms $(\omega_1,\dots,\omega_q)$ onto $\sum_{i=1}^q (d\omega_i)^2$.
We want to show that the image of this operator consists of all exact 4-forms. Observe that if \be \sum_{i=1}^q(d\omega_i)^2=d\alpha \label{decomposition}\ee
then $\alpha-\sum_{i=1}^q\omega_i\wedge d\omega_i$ is a closed form and conversely. Hence every tuple $(\omega_1,\omega_2,\dots,\omega_q)$ for which $\alpha-\sum_{i=1}^q\omega_i\wedge d\omega_i$ is exact certainly satisfies equation (\ref{decomposition}).

There is an associated operator $\bar{\mathcal D}$ defined as follows:
\[\begin{array}{rcl}\bar{\mathcal D}:\Omega^1(M)^q\times \Omega^2(M) & \longrightarrow & \Omega^3(M)\\
(\omega_1,\dots,\omega_q), \phi & \mapsto &  \sum_{i=1}^q\omega_i\wedge d\omega_i + d\phi\end{array}\]

\begin{defn} A $q$-tuple of 1-forms $(\omega_1,\dots,\omega_q)$ is said to be regular if the linear map $L:(\Omega^1(M))^q\to \Omega^3(M)$ defined by
\[(\alpha_1,\dots,\alpha_q)\mapsto \sum_{i=1}^q \alpha_i\wedge d\omega_i\]
is an epimorphism.\end{defn}
\begin{itemize}\item
If $d\omega_1,\dots,d\omega_q$ span the exterior bundle $\Lambda^2(M)$ then $(\omega_1,\dots,\omega_q)$ is a regular $q$-tuple. 
\item The operator $\bar{\mathcal D}$ is infinitesimally invertible at a regular $q$-tuple \cite{datta1}. 
\end{itemize} 
The regularity being an open condition, the set of regular $q$-tuples form an open subset in the fine $C^\infty$ topology.  We would like to show that this set is non-empty for large $q$.

\begin{prop} If $q\geq m(m+1)/2$, then there exist smooth functions $f_i:M\to\R^2$, $i=1,2,\dots,q$ such that the 2-forms $f_i^*(dx\wedge dy)$ span the exterior bundle $\Lambda^2(T^*M)$. \label{large}
\end{prop}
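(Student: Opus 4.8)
The plan is to prove the proposition by a general-position argument applied to the $1$-jet of a single map $F=(f_1,\dots,f_q):M\to(\R^2)^q$. Writing $f_i=(g_i,h_i)$, we have $f_i^*(dx\wedge dy)=dg_i\wedge dh_i$, so the desired conclusion is that the decomposable $2$-forms $dg_1\wedge dh_1,\dots,dg_q\wedge dh_q$ span $\Lambda^2T_x^*M$ at every point $x\in M$. This depends only on the first derivatives of $F$, hence is a condition on $j^1F$, the point values $f_i(x)$ being irrelevant. I would therefore describe the failure of spanning as a ``bad'' locus $\Sigma$ in the jet bundle $J^1(M,(\R^2)^q)$ and show that $\Sigma$ has codimension large enough that a generic $j^1F$ misses it.

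To estimate $\mathrm{codim}\,\Sigma$ I would work fibrewise. Fix $x$, set $U=T_x^*M$ and $d=\dim\Lambda^2U=\binom{m}{2}$; the derivative part of the jet fibre is $U^{2q}$ (the pairs $(dg_i(x),dh_i(x))=(\alpha_i,\beta_i)$), of dimension $2qm$. A configuration fails to span $\Lambda^2U$ exactly when all $q$ forms $\alpha_i\wedge\beta_i$ lie in a common hyperplane of $\Lambda^2U$, i.e. when there is a nonzero functional $\ell\in(\Lambda^2U)^*$ with $\ell(\alpha_i\wedge\beta_i)=0$ for all $i$. For each fixed $\ell\neq0$ the map $(\alpha,\beta)\mapsto\ell(\alpha\wedge\beta)$ is a nonzero bilinear form on $U\times U$, so it vanishes on a hypersurface in each factor $U^2$; imposing this for $i=1,\dots,q$ cuts out a subset of $U^{2q}$ of codimension $q$. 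Letting $\ell$ range over the projective space $\mathbb{P}((\Lambda^2U)^*)$, of dimension $d-1$, and bounding the dimension of the resulting union by that of the incidence variety, I obtain
\[
\mathrm{codim}_{U^{2q}}\Sigma_x\ \ge\ q-(d-1)\ =\ q-\binom{m}{2}+1 .
\]

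With $q\ge m(m+1)/2=\binom{m}{2}+m$ this is at least $m+1$, and since passing from the fibre to the total jet space adds the same dimension to $\Sigma$ and to $J^1$, the stratified set $\Sigma\subset J^1(M,(\R^2)^q)$ has codimension at least $m+1>m=\dim M$. I would then finish by stratifying $\Sigma$ (it is real-algebraic in each fibre, hence Whitney-stratifiable) and invoking the Thom jet-transversality theorem: for a generic $F$ the section $j^1F$ is transverse to every stratum of $\Sigma$. Since each stratum has codimension exceeding $\dim M$, transversality forces $j^1F(M)\cap\Sigma=\emptyset$, and the set of such $F$ is open and dense, hence nonempty. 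For any such $F$ the forms $dg_i\wedge dh_i$ span $\Lambda^2T_x^*M$ at every $x$, which is the assertion (the case $m\le1$ being vacuous).

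The crux is the codimension estimate for $\Sigma$, since $\Sigma$ is not a smooth submanifold but the union of a $(d-1)$-dimensional family of hypersurface intersections; the incidence-variety count above is the clean way around this, and it is what produces the sharp constant $m(m+1)/2$ while avoiding any direct analysis of the singularities of the cone of decomposable forms. A secondary point requiring care is that $\Sigma$ constrains only the derivative part of the jet, so one must note that $j^1F$ is flexible enough for Thom transversality to apply; this is standard for maps into a vector space, where the first derivatives can be perturbed freely.
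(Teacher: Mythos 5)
Your proof is correct, and it reaches the same codimension bound $q-\binom{m}{2}+1$ for the singular locus and finishes with the same Thom transversality step as the paper; but the way you establish that bound is genuinely different. The paper encodes the failure of spanning as the vanishing of all $\binom{m}{2}\times\binom{m}{2}$ minors of the matrix $\bar L$ of $2\times 2$ cofactors, and then proves (by an inductive algebraic lemma) that the particular minors containing the first $\binom{m}{2}-1$ columns are irreducible polynomials in disjoint sets of ``new'' variables, so that their common zero set has codimension equal to their number, $q-\binom{m}{2}+1$. You instead dualize: non-spanning means all $\alpha_i\wedge\beta_i$ are killed by some nonzero $\ell\in(\Lambda^2U)^*$, each fixed $\ell$ imposes $q$ independent nontrivial polynomial conditions, and sweeping $\ell$ over $\mathbb{P}((\Lambda^2U)^*)$ costs $\binom{m}{2}-1$ dimensions in the incidence variety. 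Your route avoids the irreducibility lemma entirely (including its somewhat delicate claim that the chosen minors are ``independent''), is shorter, and generalizes cleanly to decomposable $k$-forms spanning $\Lambda^kU$; the paper's route has the virtue of exhibiting explicit defining equations for the bad set and of isolating a reusable algebraic fact about determinants of matrices of minors. Two small points to keep clean in a final write-up: the zero set of the quadratic $(\alpha,\beta)\mapsto\ell(\alpha\wedge\beta)$ need not be a smooth hypersurface, but it is a proper algebraic subset of codimension at least one, which is all the dimension count requires; and the bound on $\dim\Sigma_x$ via the incidence variety uses that the image of a semialgebraic set under projection is semialgebraic of no larger dimension, which also supplies the stratification needed for Thom's theorem.
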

We first prove an algebraic result.\footnote{The above proposition is a particular case of a result that appeared in \cite{datta1}. The proof requires the algebraic lemma which was overlooked previously.}
\begin{lem}
Let $L^j$, $j=1,2,\dots,q$, be $n\times 2$ matrices whose coordinates are indeterminates. Let the $2\times 2$ minors of $L^j$ be denoted by $p^i(L^j)$, $i=1,2,\dots,N={n\choose 2}$. Form a matrix whose entries are $p^i(L^j)$, $i=1,2,\dots,N$ and $j=1,2,\dots,q$. Then the determinant of any $N\times N$ minor is an irreducible polynomial. \end{lem}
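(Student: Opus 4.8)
The plan is to reformulate the determinant geometrically. The two columns of each $L^j$ are vectors $u,v\in\R^n$, and the vector $p(L^j)=(p^1(L^j),\dots,p^N(L^j))$ of its $2\times2$ minors is precisely the image of $(u,v)$ under the wedge map $\pi\colon (u,v)\mapsto u\wedge v$ into $\Lambda^2\R^n\cong\R^N$; thus $p(L^j)$ is a decomposable $2$-vector. Since any $N$ chosen columns involve $N$ matrices with pairwise disjoint sets of indeterminates, I may relabel them $L^1,\dots,L^N$, so the minor in question is $F=\det[\,p(L^1)\mid\cdots\mid p(L^N)\,]$, a polynomial homogeneous of degree $2$ in the entries of each $L^\ell$. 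First I would record that $F\not\equiv0$: choosing the columns of $L^\ell$ to be standard basis vectors makes each $p(L^\ell)$ a basis vector $e_a\wedge e_b$ of $\Lambda^2\R^n$, turning the bracketed matrix into a permutation matrix, so $F$ specialises to $\pm1$.

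Suppose then that $F=GH$ with $G,H$ non-constant; the aim is a contradiction. The first step is a one-variable analysis. Fix a group $\ell$ and specialise every other $L^{\ell'}$ to generic values; then $F$ becomes $\langle C,p(L^\ell)\rangle$, where $C=(C_i)$ is the cofactor vector built from the other $N-1$ now-fixed columns. Because the decomposable vectors span $\R^N$, for generic values these columns are linearly independent, so $C\neq0$, and $C$ determines a nonzero skew-symmetric matrix $A_C$: the specialised $F$ equals $u^{t}A_Cv$ in the two columns $u,v$ of $L^\ell$. As a quadratic form in the $2n$ variables $(u,v)$ this has rank $2\,\mathrm{rank}(A_C)\ge4$ (a nonzero skew matrix has even rank at least $2$), hence is irreducible. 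Since $F=GH$, one of $G,H$ must specialise in $L^\ell$ to a nonzero constant. The set of specialisations for which $G$ becomes constant in $L^\ell$, and the analogous set for $H$, are Zariski-closed; their union contains a dense set, so by irreducibility of the affine space one of them is the whole space, i.e.\ one of $G,H$ does not involve $L^\ell$. Running this over all $\ell$ partitions the groups into two nonempty sets $S_G\sqcup S_H$, with $G$ depending only on $\{L^\ell:\ell\in S_G\}$ and $H$ only on $\{L^\ell:\ell\in S_H\}$.

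The main obstacle, and the second step, is to exclude this partition, and here the decomposable structure is essential. I would pick $\ell_1\in S_G$ and $\ell_2\in S_H$ and specialise the remaining $N-2$ groups so that their images become $N-2$ distinct basis vectors of $\Lambda^2\R^n$, omitting exactly $e_a\wedge e_b$ and $e_c\wedge e_d$. Cofactor expansion then collapses $F$ to
\be \pm\bigl(p^{ab}(L^{\ell_1})\,p^{cd}(L^{\ell_2})-p^{cd}(L^{\ell_1})\,p^{ab}(L^{\ell_2})\bigr). \ee
If the factorisation respected the partition, this would be a product of a polynomial in $L^{\ell_1}$ with one in $L^{\ell_2}$, i.e.\ a rank-one element of the tensor product of the two coordinate rings. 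But $p^{ab}$ and $p^{cd}$ are distinct $2\times2$ minors of an indeterminate matrix, supported on disjoint monomials and hence linearly independent, so the displayed expression is a rank-two tensor; a rank-one tensor cannot equal it. This contradiction shows no nontrivial factorisation exists, so $F$ is irreducible.

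I expect the second step to be the crux. The one-variable analysis by itself yields only the splitting of the columns into two groups; it is the rigidity of the determinant restricted to a product of cones of \emph{decomposable} vectors—made concrete by the rank-two tensor identity after specialising all but two columns to basis vectors—that finally forbids the split. A minor technical point to handle carefully is that the one-variable and the genericity arguments should be run over an algebraically closed coefficient field, so that irreducibility there descends to $\R$ and $\mathbb Q$.
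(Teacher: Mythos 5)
Your proof is correct, and it takes a genuinely different route from the paper's. The paper argues by finite induction on the size of the principal minors of the $N\times N$ matrix $Y$: it records that each entry $p^i(L^j)$ is an irreducible quadratic in independent variables and that $\det Y$ has degree at most one in each indeterminate, and the inductive step is a Laplace expansion along the last column, writing $\det Q_{k+1}=\sum_a x^{k+1}_a B_a$ with $B_a=\sum_m \bar{x}^{k+1}_m A_{am}$, and passing from the irreducibility of the $k\times k$ minors $A_{am}$ to that of the $B_a$ and then of $Q_{k+1}$. You instead run a global factorization argument: assuming $F=GH$, specializing all but one column generically reduces $F$ to a skew bilinear form $u^tA_Cv$ of rank at least $4$, hence irreducible, which by a Zariski-density argument forces the variable groups to split between $G$ and $H$; you then rule out the split by specializing all but two columns to basis bivectors and observing that the resulting expression $p^{ab}(L^{\ell_1})p^{cd}(L^{\ell_2})-p^{cd}(L^{\ell_1})p^{ab}(L^{\ell_2})$ has tensor rank two and so cannot factor as a product of a polynomial in $L^{\ell_1}$ with one in $L^{\ell_2}$. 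What your approach buys is that every step reduces to a standard, fully checkable fact (irreducibility of quadratic forms of rank $\geq 3$, irreducibility of affine space, tensor rank), and it makes explicit where the decomposable (Pl\"ucker) structure of the columns enters; the paper's induction is shorter, but its concluding step --- that a linear combination $\sum_a x_a B_a$ of irreducibles in fresh variables is irreducible --- is asserted rather than argued and is the delicate point there. Your treatment also covers the edge cases cleanly: the case $N=1$ is absorbed by the rank computation, and overlapping index pairs $\{a,b\}$, $\{c,d\}$ cause no trouble since distinct $2\times 2$ minors are supported on disjoint monomials and hence remain linearly independent.
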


\begin{proof}Denote the row vectors of $L^j$ by $(x^j_k, \bar{x}^j_k)$. Without loss of generality consider the first $N$ columns of the matrix and call the matrix formed by these columns by $Y$. We will show by finite induction that $\det Y$ is an irreducible polynomial. The matrix $Y$ has the following two important properties:\begin{enumerate}
\item[(a)] Every entry of the matrix is an irreducible polynomial because it is the determinant of a matrix who entries are independents variables. 
\item[(b)] The determinant of $Y$ is a homogeneous polynomial which is linear in every indeterminate.
\end{enumerate}
The first observation starts the induction. We shall now show that if all $k\times k$ principal minors of $Y$ are irreducible polynomials then so are the  $(k+1)\times (k+1)$ minors. To see this it enough to consider any one of the $(k+1)\times(k+1)$ minor and we take the first $(k+1)\times (k+1)$-principal minor $Q_{k+1}= \left(p^i(L^j)\right)$, $i,j=1,2,\dots,{k+1}$. This also has the two properties mentioned above. If we expand the determinant with respect to the last column it can be written in the form 
$\sum _{i=1}^n x^{k+1}_iB_i$, where $B_i=\sum_{m=1}^n \bar{x}^{k+1}_m A_{im}$ and $A_{im}$ are $k\times k$ minor of the $(k+1)\times (k+1)$ matrix corresponding to the elements of the last column. None of the variables $x^{k+1}_i, \bar{x}^{k+1}_i$, $i=1,2,\dots,n$ appear in $A_{im}$. Since each $A_{im}$ is irreducible and $B_i$ is a linear polynomial it follows that $B_i$ is irreducible for each $i$. By the same argument again, $Q_{k+1}$ is irreducible.
\end{proof}

{\em Proof of Propostion 3.1: }
Fix a basis $e_1,e_2,\ldots,e_m$ for $\R^m$.
Let $L:\R^m \to \R^2\times\dots\times\R^2$ be a linear map into the $q$-fold product of $\R^2$. Then $L$ can be expressed as
$L=(L_1,L_2,\dots,L_q)$, where $L_i$ is the projection of $L$ onto the $i$-th copy of $\R^2$.

Denote the canonical volume form on $\R^2$ by $\sigma$. Suppose that $L_1^*\sigma, 
L_2^*\sigma,\dots, L_q^*\sigma$ span the bundle $\Lambda^2(\R^m)$. The
$2\times 2$ cofactors of $L_i$ correspond to the
values of $L_i^*\sigma$ on the $2$-tuples of basis vectors $(e_{i_1},e_{i_2})$, where $\{i_1,i_2\}$ is an ordered subset of $\{1,2,\dots,m\}$.  If
$\bar{L}_i$ denote the column vector formed by the $2\times 2$
cofactors of the matrix $L_i$ then the above condition means
that $\bar{L}=(\bar{L}_1,\dots,\bar{L}_q)$ has the maximum rank. Let
$\Sigma'$ consist of all linear maps $L=(L_1,\dots,L_q):\R^m\longrightarrow\R^{2q}$ such that
rank\,$\bar{L}$ is strictly less than $l={m\choose 2}$; in other words,
any $l\times l$ cofactor of $\bar{L}$ is zero. Consider all $l\times l$ cofactors which contains the first $l-1$ columns:
\[P_i=\det (\bar{L}_1\ \bar{L}_2\ \dots \bar{L}_{l-1}\ \bar{L}_k), \ \ \ \ \ k\in\{l,l+1,\dots,q\}.\]
By the lemma above, each polynomial $P_i$ is irreducible. Moreover, the polynomials are clearly independent, as the variables of $P_i$ is not contained in the variables appearing in the rest of the polynomials. Thus the common zero set of the polynomials $P_i$'s has the maximum codimension. This implies that $\Sigma'$ is semialgebraic and hence stratified (\cite{gromov}, 1.3.1) and the codimension of $\Sigma'$ in $L({\mathbb R}^m,{\mathbb R}^{2q})$ is $q-{m\choose 2}+1$.

Let $\Sigma$ be the subset of the 1-jet space $J^1(M,N)$
consisting of all 1-jets $j^1_f(x)$ such that $\{f_i^*\sigma:\
i=1,2,\dots,q\}$ do not span $\Lambda^k_x(M)$. Hence a map
$f:M\longrightarrow N$ is $\sigma$-large if its 1-jet map misses
the set $\Sigma$. Since $\sigma$ has global symmetry, the singular
set $\Sigma$ in the 1-jet space fibres over $M$ and therefore it
is stratified with codimension $q-{m\choose 2}+1$. Hence by the Thom
Transversality Theorem, a generic map is $\sigma$-large if $q-
{m\choose 2}\geq m$.\qed
\begin{prop}
There exists a regular $q$-tuple $(\omega_1,\dots,\omega_q)$ such that $\sum_{i=1}^q\omega_i\wedge d\omega_i=0$.\label{decomposition0}
\end{prop}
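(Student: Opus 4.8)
The plan is to construct the required tuple explicitly from the maps produced by Proposition~\ref{large}, arranging each $\omega_i$ to be a ``primitive'' one-form whose differential is a pulled-back area form. Assume $q\geq m(m+1)/2$ and choose smooth maps $f_i=(u_i,v_i):M\to\R^2$, $i=1,\dots,q$, as in Proposition~\ref{large}, so that the two-forms $f_i^*(dx\wedge dy)=du_i\wedge dv_i$ span $\Lambda^2(T^*M)$ at every point. I would then set
\[\omega_i=u_i\,dv_i,\qquad i=1,2,\dots,q,\]
which is a globally defined smooth one-form on $M$ since $u_i$ and $v_i$ are global smooth functions.

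First I would verify regularity. Since $d\omega_i=du_i\wedge dv_i=f_i^*(dx\wedge dy)$, the forms $d\omega_1,\dots,d\omega_q$ span $\Lambda^2(T^*M)$ by the choice of the $f_i$. By the first of the two bulleted observations preceding Proposition~\ref{large}, this spanning condition is exactly what guarantees that $(\omega_1,\dots,\omega_q)$ is a regular $q$-tuple, i.e. that the associated linear map $L:(\Omega^1(M))^q\to\Omega^3(M)$, $(\alpha_1,\dots,\alpha_q)\mapsto\sum_i\alpha_i\wedge d\omega_i$, is an epimorphism. Next I would check the nonlinear constraint. Computing term by term,
\[\omega_i\wedge d\omega_i=(u_i\,dv_i)\wedge(du_i\wedge dv_i)=u_i\,(dv_i\wedge du_i\wedge dv_i)=0,\]
because the factor $dv_i$ is repeated in the triple wedge. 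Summing over $i$ yields $\sum_{i=1}^q\omega_i\wedge d\omega_i=0$, as required.

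In truth there is no serious obstacle to overcome here: the whole proof rests on the algebraic coincidence that a one-form of the special shape ``function times the differential of a second function'' wedges to zero with its own differential, so the constraint $\sum_i\omega_i\wedge d\omega_i=0$ holds identically and decouples completely from the spanning requirement that delivers regularity. The only point demanding any care is that one must exhibit the $\omega_i$ as \emph{globally} defined smooth one-forms so that the tuple lives in $(\Omega^1(M))^q$; the choice $\omega_i=u_i\,dv_i$ meets this since it is built from globally defined functions, and no local patching or partition-of-unity argument is needed. The real analytic input of the section is therefore concentrated entirely in Proposition~\ref{large}, on which this proposition piggybacks.
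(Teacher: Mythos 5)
Your proof is correct and is essentially the paper's own argument: the paper sets $\omega_i=f_i^*\tau$ for a primitive $\tau$ of $dx\wedge dy$ and observes that $\omega_i\wedge d\omega_i=f_i^*(\tau\wedge\sigma)=0$ because every $3$-form on $\R^2$ vanishes, while you simply make the explicit choice $\tau=x\,dy$ and verify the vanishing by the repeated factor $dv_i$. The two verifications are the same computation in different clothing.
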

\begin{proof} Let $f=(f_1,\dots,f_q)$ be as in Proposition~\ref{large}. Note that $\sigma=dx\wedge dy$ is an exact form and so we can write it as $\sigma=d\tau$. Set $\omega_i=f_i^*\tau$. By our assumption $(\omega_1,\dots,\omega_q)$ is regular. Further, since any 3-form on $\R^2$ is zero, $\omega_i\wedge d\omega_i=f_i^*(\tau\wedge \sigma)=0$. Therefore, 
$\sum_{i=1}^q\omega_i\wedge d\omega_i=0$. This proves the proposition.
\end{proof}

\begin{prop} Let $M$ be a closed manifold of dimension $m$. If  $q\geq m(m+1)/2$, then 
\begin{enumerate}\item every 3-form on $M$ can be expressed as \begin{center}$\sum_{i=1}^q\omega_i\wedge d\omega_i+$ an exact form\end{center} and hence  
\item every exact $4$-form $\sigma$ on $M$ can be expressed as 
\[\sigma=\sum_{i=1}^q(d\omega_i)^2,\]
where $\omega_i$, $i=1,\dots,q$, are 1-forms on $M$.\end{enumerate}
\label{decomposition1}\end{prop}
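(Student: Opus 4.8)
The plan is to read both assertions off the structure of $\bar{\mathcal D}$ and to deduce (2) from (1) by a single application of $d$. The first observation is that statement (1) is nothing but the \emph{surjectivity} of the operator $\bar{\mathcal D}:\Omega^1(M)^q\times\Omega^2(M)\to\Omega^3(M)$: by definition $\bar{\mathcal D}\big((\omega_1,\dots,\omega_q),\phi\big)=\sum_{i=1}^q\omega_i\wedge d\omega_i+d\phi$, and $d\phi$ ranges over all exact $3$-forms as $\phi$ ranges over $\Omega^2(M)$. Thus the whole content of part (1) is that $\bar{\mathcal D}$ is onto $\Omega^3(M)$.

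To prove this I would combine the infinitesimal invertibility of $\bar{\mathcal D}$ with a scaling argument. Since $q\geq m(m+1)/2$, Proposition~\ref{decomposition0} furnishes a regular $q$-tuple $(\omega_1^0,\dots,\omega_q^0)$ with $\sum_i\omega_i^0\wedge d\omega_i^0=0$, so that $\bar{\mathcal D}\big((\omega^0),0\big)=0$. Because regularity is an open condition, $\bar{\mathcal D}$ is infinitesimally invertible on an open neighbourhood of the point $\big((\omega^0),0\big)$, and as $M$ is closed the Implicit Function Theorem for smooth (infinitesimally invertible) differential operators applies there. It shows that the image of $\bar{\mathcal D}$ contains an entire neighbourhood $V$ of $\bar{\mathcal D}\big((\omega^0),0\big)=0$ in $\Omega^3(M)$.

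It remains to upgrade this local surjectivity near $0$ to global surjectivity, and the key observation is that the image of $\bar{\mathcal D}$ is a cone. Indeed, for every $s\in\R$ one computes
\[\bar{\mathcal D}\big((s\omega_1,\dots,s\omega_q),\,s^2\phi\big)=s^2\sum_{i=1}^q\omega_i\wedge d\omega_i+s^2\,d\phi=s^2\,\bar{\mathcal D}\big((\omega_1,\dots,\omega_q),\phi\big),\]
so the image is invariant under multiplication by every $s^2\geq 0$, that is, by every non-negative real. Given an arbitrary $3$-form $\eta$, I would choose $t>0$ small enough that $t\eta\in V$, so that $t\eta$ lies in the image of $\bar{\mathcal D}$; scaling back up by $1/t$ then shows that $\eta$ itself lies in the image. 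Hence $\bar{\mathcal D}$ is onto, which is precisely (1).

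Finally, (2) is an immediate corollary. Let $\sigma$ be an exact $4$-form and write $\sigma=d\eta$. Applying (1) to the $3$-form $\eta$ produces $1$-forms $\omega_i$ and a $2$-form $\phi$ with $\eta=\sum_i\omega_i\wedge d\omega_i+d\phi$. Since $d(\omega_i\wedge d\omega_i)=d\omega_i\wedge d\omega_i=(d\omega_i)^2$ and $d(d\phi)=0$, applying $d$ gives $\sigma=d\eta=\sum_{i=1}^q(d\omega_i)^2$, as required. The one genuinely analytic step, and the main obstacle, is the passage from infinitesimal invertibility at the regular tuple to actual local surjectivity of $\bar{\mathcal D}$ near $0$: this is exactly where the Nash--Moser type Implicit Function Theorem and the compactness of $M$ are needed, the subsequent cone/scaling argument and the reduction of (2) to (1) being purely formal.
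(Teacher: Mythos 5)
Your proof is correct and follows the same route as the paper: infinitesimal invertibility of $\bar{\mathcal D}$ at regular tuples, the regular tuple with $\sum_i\omega_i\wedge d\omega_i=0$ from Proposition~\ref{decomposition0} giving $0$ in the image, openness of the image via the Nash--Gromov implicit function theorem, and then reduction of (2) to (1) by applying $d$. In fact you go further than the text as written: the paper's proof breaks off after asserting that the image is non-empty and open, whereas your quadratic-scaling identity $\bar{\mathcal D}\bigl((s\omega_i),s^2\phi\bigr)=s^2\,\bar{\mathcal D}\bigl((\omega_i),\phi\bigr)$ is exactly the missing step (taken from the cited earlier work) that upgrades local surjectivity near $0$ to global surjectivity.
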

\begin{proof}
Let $S$ denote the set of regular $q$-tuples of 1-forms on $M$. Since $\bar{\mathcal D}$ is  infinitesimally invertible on $S$, its restriction to this set is an open map. Now, $S$ being a non-empty open set, the image of $\bar{\mathcal D}$ is non-empty and open. We shall now show that $\bar{\mathcal D}$ is surjective.
\end{proof}
\section{Proof of the main results}
In this section we prove the main results of the article.
\begin{prop} Let $P\to M$ be a principal $Sp(k_0)$ bundle over a closed manifold $M$. Let $\omega_0$ be a connection on $P$ and let $\alpha$ be any differential 3-form on $M$. There exists a connection $\omega$ on the trivial $Sp(1)^k$-bundle $P_0$ such that the secondary characteristic form of the pair $(\omega_0\oplus 0,\omega_0\oplus \omega)$ on $P\oplus P_0$ is equivalent to $\alpha$ modulo an exact 3-form, provided $k\geq \frac{m(m+1)}{4}$.\label{secondary}
\end{prop}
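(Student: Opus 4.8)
The plan is to show that the secondary characteristic form of the pair $(\omega_0\oplus 0,\omega_0\oplus\omega)$ depends only on the connection $\omega$ on the trivial factor $P_0$, and then to recognise the resulting 3-form as lying (modulo exact forms) in the image of the operator $\bar{\mathcal D}$ studied in Section 3.

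First I would reduce the secondary form of $(\omega_0\oplus 0,\omega_0\oplus\omega)$ to $\bar p_1(0,\omega)$. Parametrising the straight-line deformation on $P\oplus P_0$ as $\omega_0\oplus t\omega$, $t\in[0,1]$, its curvature splits as $D\omega_0\oplus D(t\omega)$, since the curvature of a direct-sum connection is block diagonal. The increment $0\oplus\omega$ vanishes on the $P$-block, so the integrand $\mathrm{trace}\big((0\oplus\omega)\wedge D(\omega_0\oplus t\omega)\big)$ reduces to $\mathrm{trace}(\omega\wedge D(t\omega))$. Hence the secondary form equals $\bar p_1(0,\omega)=\frac12\mathrm{trace}(\omega\wedge D\omega)$, independently of $\omega_0$; this uses the formula recorded in the Preliminaries together with the additivity of $p_1$.

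Next I would make an explicit choice of $\omega$ on $P_0$. On the $i$-th $Sp(1)$ factor I take the purely off-diagonal connection $\left(\begin{array}{cc}0 & \beta_i+\sqrt{-1}\,\gamma_i\\ -\beta_i+\sqrt{-1}\,\gamma_i & 0\end{array}\right)$, where $\beta_i,\gamma_i$ are real 1-forms. A direct computation gives $\mathrm{trace}(\omega\wedge d\omega)=-2\sum_{i=1}^k(\beta_i\wedge d\beta_i+\gamma_i\wedge d\gamma_i)$, while the cubic term $\mathrm{trace}(\omega\wedge\omega\wedge\omega)$ vanishes: in each factor $\omega$ is off-diagonal, so $\omega\wedge\omega$ is diagonal and $\omega\wedge\omega\wedge\omega$ is again off-diagonal, with zero trace. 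Therefore $\frac12\mathrm{trace}(\omega\wedge D\omega)=-\sum_{i=1}^k(\beta_i\wedge d\beta_i+\gamma_i\wedge d\gamma_i)$, a sum of $2k$ terms of the form $\omega_j\wedge d\omega_j$.

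Finally I would apply Proposition~\ref{decomposition1}(1) to the 3-form $-\alpha$. Since the $k$ off-diagonal blocks supply $q=2k$ one-forms, the hypothesis $q\geq m(m+1)/2$ becomes exactly $k\geq m(m+1)/4$, and the proposition produces 1-forms with $\sum_j\omega_j\wedge d\omega_j=-\alpha$ modulo an exact 3-form. Relabelling these $2k$ one-forms as the $\beta_i,\gamma_i$ above yields the desired connection $\omega$, whose secondary form is $-\sum_j\omega_j\wedge d\omega_j\equiv\alpha$ modulo exact forms. The point requiring the most care is the second step: the choice of a purely off-diagonal connection is precisely what both kills the cubic term (keeping the secondary form inside the image of $\bar{\mathcal D}$) and doubles the number of usable 1-forms per $Sp(1)$ factor, and it is this doubling that lowers the bound from $m(m+1)/2$ to $m(m+1)/4$.
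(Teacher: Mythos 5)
Your proof is correct and follows the same overall strategy as the paper: reduce the secondary form of the pair $(\omega_0\oplus 0,\omega_0\oplus\omega)$ to a Chern--Simons-type expression depending only on $\omega$, choose $\omega$ block by block on the $Sp(1)$ factors so that the cubic trace term vanishes and the result is a sum of terms $\omega_j\wedge d\omega_j$, and then invoke Proposition~\ref{decomposition1}(1). The one genuine difference is the choice of block. The paper takes the \emph{diagonal} $\mathfrak{sp}(1)$-valued form with entries $i\alpha_j,-i\alpha_j$, which yields $\sum_{j=1}^k\alpha_j\wedge d\alpha_j$ (up to sign) and hence only $k$ usable $1$-forms; feeding this into Proposition~\ref{decomposition1} would require $k\geq m(m+1)/2$, which does not match the bound $k\geq m(m+1)/4$ claimed in the statement. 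Your \emph{off-diagonal} choice supplies two $1$-forms $\beta_i,\gamma_i$ per $Sp(1)$ factor while still killing $\mathrm{trace}(\omega\wedge\omega\wedge\omega)$ (off-diagonal cubed is off-diagonal, hence traceless), giving $2k$ terms and exactly the stated bound $2k\geq m(m+1)/2$. Your intermediate computations --- the splitting of the curvature of $\omega_0\oplus t\omega$, the reduction of the secondary form to $\tfrac12\mathrm{trace}(\omega\wedge D\omega)$, and $\mathrm{trace}(\omega\wedge d\omega)=-2\sum_i(\beta_i\wedge d\beta_i+\gamma_i\wedge d\gamma_i)$ --- all check out, and the sign is absorbed by applying the decomposition to $-\alpha$. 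So your route is not merely equivalent: it is the variant that actually justifies the constant $m(m+1)/4$ appearing in the Proposition.
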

\begin{proof}The secondary characteristic class for a pair of connections $(\omega_0\oplus 0,\omega_0\oplus\omega)$ as in the statement above can be expressed as 
\[\begin{array}{rcl}\bar{p}_1(\omega_0\oplus 0,\omega_0\oplus\omega) & = & \int_0^1\text{trace}((0\oplus\omega)\wedge D(\omega_0\oplus t\omega))\\
& = & \int_0^1 \text{trace }(t\omega\wedge D\omega)\,dt\\
\end{array}\] 
If we take $\omega$ in the $2\times 2$ block diagonal form with diagonal entries 
\[\left(
  \begin{array}{cc}
    i\alpha_j & 0 \\
    0 & -i\alpha_j
  \end{array}
\right)\]
where $\alpha_j\in \Omega^1(M)$ for $j=1,2,\dots,k$, then 
\[\bar{p}_1(\omega_0\oplus 0,\omega_0\oplus\omega)=\sum_{j=1}^k\alpha_j\wedge d\alpha_j.\]
The result follows in view of Propositions~\ref{decomposition1}.
\end{proof}
Let $P$ be a principal $Sp(n)$ bundle and let $\langle P\rangle$ denote its stable equivalence class. By a characteristic form of the stable equivalence class of $P$ we shall understand a form which is the characteristic form of some connection on a principal bundle $Q\in\langle P\rangle$. Observing that every principal bundle $P\oplus Sp(k)$ represents $\langle P\rangle$ we can conclude from the above result that every 3-form on $M$, modulo some exact form, is the secondary characteristic form of $\langle P\rangle$.
Since the characteristic classes of a principal bundle only depends on its stable equivalence class we conclude from the 
We also prove the following:

Let $M$ be any manifold. Every 3-form on $M$ is the secondary characteristic form of a pair of connections $(\omega_0,\omega)$ on a trivial $Sp$ bundle, modulo exact forms, where $\omega_0$ is the trivial connection.
\begin{prop} 
Let $M$ be an $m$-dimensional manifold and let $n$ be an integer satisfying $n\geq \frac{m(m+1)}{6}$. Then every exact 4-form on $M$ is the first symplectic Pontrjagin form of some connection on a trivial $Sp(n)$ bundle over $M$.\label{exact_char}\end{prop}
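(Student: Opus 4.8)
The plan is to realise $\sigma$ using the block-diagonal trivial $Sp(1)^n$ bundle inside the trivial $Sp(n)$ bundle, exploiting \emph{all three} directions of $\mathfrak{sp}(1)$ in each factor; this is what produces the factor $3$ improvement over Proposition~\ref{decomposition1}. By the additivity of $p_1$ proved above, it suffices to understand $p_1$ on $Sp(1)^n$. A connection on a single trivial $Sp(1)$ factor is an $\mathfrak{sp}(1)$-valued $1$-form, i.e. a triple $(\theta_1,\theta_2,\theta_3)$ of real $1$-forms, and evaluating $f_1$ on its curvature $\Omega=d\omega+\omega\wedge\omega$ (using $\text{trace}\,\omega=0$) gives, for a fixed constant $c_0\neq 0$ whose precise value is immaterial,
\be
p_1(\omega)=-\sum_{a=1}^3(d\theta_a)^2+c_0\,d(\theta_1\wedge\theta_2\wedge\theta_3).
\ee
Hence for a connection on $Sp(1)^n$ with components $(\theta^{(l)}_1,\theta^{(l)}_2,\theta^{(l)}_3)$, $l=1,\dots,n$, we have $p_1(\omega)=d\,\bar p_1(0,\omega)$ with secondary form
\be
\bar p_1(0,\omega)=-\sum_{l=1}^n\sum_{a=1}^3\theta^{(l)}_a\wedge d\theta^{(l)}_a+c_0\sum_{l=1}^n\theta^{(l)}_1\wedge\theta^{(l)}_2\wedge\theta^{(l)}_3,
\ee
an explicit expression that is quadratic-plus-cubic in the $1$-forms. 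The cubic term is the only new feature compared with the operator $\bar{\mathcal D}$ of Section~3.

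Next, given $\sigma=d\alpha$, I would reduce the equation $p_1(\omega)=\sigma$ to a surjectivity statement for a single operator landing in $\Omega^3(M)$. Define
\[
\bar{\mathcal D}':(\Omega^1(M))^{3n}\times\Omega^2(M)\lgra\Omega^3(M),\qquad \big((\theta^{(l)}_a),\phi\big)\longmapsto\bar p_1(0,\omega)+d\phi.
\]
If $\bar{\mathcal D}'$ is surjective, choose a preimage of $\alpha$; applying $d$ and using $d\,\bar p_1(0,\omega)=p_1(\omega)$ yields $p_1(\omega)=d\alpha=\sigma$, as wanted. This is the exact analogue of the reduction carried out for $\bar{\mathcal D}$, and it remains to prove the surjectivity of $\bar{\mathcal D}'$.

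For this I would run the argument of Proposition~\ref{decomposition1}. Call $(\theta^{(l)}_a)$ regular if the $3n$ two-forms $d\theta^{(l)}_a$ span $\Lambda^2(T^*M)$ at each point; by Proposition~\ref{large} (with $q=3n$) such tuples exist exactly when $3n\geq m(m+1)/2$, that is $n\geq m(m+1)/6$, which is the stated bound, and one may take the base point provided by Proposition~\ref{decomposition0} (for which $\sum_{l,a}\theta^{(l)}_a\wedge d\theta^{(l)}_a=0$). At a regular tuple the linearisation of $\bar{\mathcal D}'$ differs from that of $\bar{\mathcal D}$ only by the linearisation of the cubic term; crucially this extra term involves the increments $\beta^{(l)}_a$ \emph{undifferentiated}, exactly like the term $\sum_{l,a}\beta^{(l)}_a\wedge d\theta^{(l)}_a$ which, by regularity, already maps onto $\Lambda^3(T^*M)=\Lambda^1(T^*M)\wedge\Lambda^2(T^*M)$ pointwise. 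Thus the cubic contribution is absorbed into the same zeroth-order algebraic inversion, and $\bar{\mathcal D}'$ is infinitesimally invertible at every regular tuple by the argument of \cite{datta1}. Feeding this into the implicit function theorem for infinitesimally invertible smooth operators (\cite{datta1}, after \cite{gromov}) shows that $\bar{\mathcal D}'$ is an open map; upgrading openness to surjectivity onto $\Omega^3(M)$ by the continuation argument of Proposition~\ref{decomposition1} completes the proof.

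The step I expect to be the main obstacle is the infinitesimal inversion in the presence of the cubic term: although its linearisation is undifferentiated, one must check that adding it to the (already surjective) leading algebraic operator does not destroy the pointwise surjectivity, and that the resulting family of right inverses still obeys the tame estimates required by the implicit function theorem. Since the cubic term also breaks the exact homogeneity that makes the surjectivity of $\bar{\mathcal D}$ immediate, the passage from openness to global surjectivity must be run as a genuine continuation argument rather than a scaling argument; once these points are secured, everything else is parallel to the quadratic operator $\bar{\mathcal D}$ treated in \cite{datta1}.
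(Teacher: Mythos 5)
Your overall strategy is the same as the paper's: embed the trivial $Sp(1)^n$ bundle block-diagonally in the trivial $Sp(n)$ bundle, use all three directions of $\mathfrak{sp}(1)$ in each factor to obtain $3n$ one-forms, and feed the problem into the sum-of-squares machinery of Section~3 with $q=3n$, which is exactly where the bound $n\geq m(m+1)/6$ comes from. The difference lies in the computation of $p_1$ on a single $Sp(1)$ factor. The paper asserts that $\omega\wedge\omega=0$ for an $\mathfrak{sp}(1)$-valued $1$-form, so that $\Omega=d\omega$ and $p_1(\omega)=(d\alpha)^2+(d\beta)^2+(d\gamma)^2$ on the nose; this is false, since $\mathfrak{sp}(1)$ is non-abelian (for instance the $(1,1)$ entry of $\omega\wedge\omega$ is $2i\,\beta\wedge\gamma$), and the correct identity is $\mathrm{trace}(\Omega^2)=\mathrm{trace}((d\omega)^2)+\tfrac{2}{3}\,d\,\mathrm{trace}(\omega^3)$, i.e.\ your exact cubic correction $c_0\,d(\theta_1\wedge\theta_2\wedge\theta_3)$ is genuinely present. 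So your version is the more careful one: one cannot simply quote Proposition~\ref{decomposition1}(2) for the $3n$ forms, and your modified operator $\bar{\mathcal D}'$ is the honest way to absorb the discrepancy. Two caveats on your side. First, you still owe the verification you yourself flag: the perturbed zeroth-order linearisation sends $\beta^{(l)}_a$ to $\beta^{(l)}_a\wedge\bigl(2\,d\theta^{(l)}_a\pm c_0\,\theta^{(l)}_b\wedge\theta^{(l)}_c\bigr)$, and you need these modified $2$-forms to span $\Lambda^2$ at the chosen base tuple; this is easily secured by rescaling $\theta\mapsto\epsilon\theta$ (which preserves the condition $\sum\theta\wedge d\theta=0$ of Proposition~\ref{decomposition0} and makes the quadratic correction subdominant after dividing by $\epsilon$), but it should be said explicitly. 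Second, note that the paper's own proof of the surjectivity of $\bar{\mathcal D}$ in Proposition~\ref{decomposition1} breaks off after establishing openness of the image, so the ``continuation argument'' you invoke to pass from openness to surjectivity is not actually supplied anywhere in this paper; you are inheriting that gap rather than creating it.
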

\begin{proof} If $P$ is a trivial $Sp(1)$-bundle then a connection is simply a ${\mathfrak sp}(1)$-valued 1-form $M$. Therefore, a connection $\omega$ on $P$ is of the following form:
\[\left(
  \begin{array}{cc}
    i\alpha & \beta+i\gamma \\
    -\beta+i\gamma & -i\alpha
  \end{array}
\right)\]
where $\alpha$, $\beta$ and $\gamma$ are smooth 1-forms on $M$. The curvature form of $\Omega$ is equal to $d\omega$ since $\omega\wedge\omega=0$, and  $p_1(\omega)= \mbox{\,trace\,}\Omega^2=(d\alpha)^2+(d\beta)^2+(d\gamma)^2$. Consider a $Sp_k$-connection $\omega$ on the trivial bundle over $M$ which is of the form Diag $(\omega_1,\omega_2,\dots,\omega_k)$, where each $\omega_i$ is a $2\times 2$ block of the form described as above. Thus every 3-form on $M$ can be expressed as the Symplectic Pontrjagin form of some connection on trivial $Sp(k)$ bundle for $3k\geq \frac{m(m+1)}{2}$.
\end{proof}

\begin{thm}  Let $P$ be a principal $Sp(n)$ bundle over a
closed manifold $M$ of dimension $m$. Then every $4$-form representing the
the first Symplectic Pontrjagin class $p_1$ of $P$ is the Symplectic Pontrjagin form
of some connection on $P$ provided 
\[n>m_0+ m(m+1)/6.\] 
where $m_0=[(m+1)/4]-1$.\end{thm}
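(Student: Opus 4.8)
The plan is to follow the additivity-based strategy sketched for Theorem~\ref{primary}: split $P$ as an orthogonal sum of a small ``core'' bundle carrying all of its topology and a large trivial bundle, put an arbitrary connection on the core, and then invoke Proposition~\ref{exact_char} to build a connection on the trivial summand whose first symplectic Pontrjagin form supplies exactly the difference between the prescribed form $\omega$ and the form produced by the core.

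First I would carry out the topological reduction. Since $P$ is an $Sp(n)$-bundle over the $m$-manifold $M$ and $n$ is large, its classifying map lifts along $BSp(m_0)\to BSp(n)$; equivalently, $P$ admits $q=n-m_0$ independent quaternionic sections and therefore splits as $P\cong P_1\oplus \underline{\H^{q}}$, where $P_1$ is an $Sp(m_0)$-bundle and $\underline{\H^{q}}$ is the trivial $Sp(q)$-bundle. The integer $m_0=[(m+1)/4]-1$ is chosen as the corank for which this reduction is guaranteed over every $m$-manifold (a stable-range/connectivity estimate for the quaternionic Stiefel fibres), and the hypothesis $n>m_0+m(m+1)/6$ is exactly what forces the trivial part to be large, $q=n-m_0\geq m(m+1)/6$.

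Next I would fix any connection $\nabla_1$ on $P_1$ and observe, using the additivity of $p_1$ established above together with $p_1\equiv 0$ for the trivial connection on a trivial bundle, that $p_1(\nabla_1\oplus 0)=p_1(\nabla_1)$ is a closed $4$-form representing the class $p_1(P)=p_1(P_1)$. Since $\omega$ represents the same class by assumption, the difference $\omega-p_1(\nabla_1)$ is exact. Applying Proposition~\ref{exact_char} to the trivial $Sp(q)$-bundle $\underline{\H^{q}}$ — legitimate precisely because $q\geq m(m+1)/6$ — I obtain a connection $\nabla_2$ on it with $p_1(\nabla_2)=\omega-p_1(\nabla_1)$. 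The direct-sum connection $\nabla=\nabla_1\oplus\nabla_2$ on $P$ then satisfies, again by additivity,
\[
 p_1(\nabla)=p_1(\nabla_1)+p_1(\nabla_2)=p_1(\nabla_1)+\bigl(\omega-p_1(\nabla_1)\bigr)=\omega,
\]
which is the desired conclusion.

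The genuinely hard input is hidden inside Proposition~\ref{exact_char}, and beneath it Proposition~\ref{decomposition1}: realising an arbitrary exact $4$-form as a sum of squares $\sum(d\omega_i)^2$ rests on the infinitesimal invertibility of $\bar{\mathcal D}$ at a regular $q$-tuple and on the implicit function theorem for smooth differential operators, which is also where the quadratic lower bound on $q$ originates. Within the present assembly, however, those analytic facts are already available, so the only point demanding care is the rank bookkeeping: one must check that the reduction really leaves a trivial summand of rank at least $m(m+1)/6$, i.e. that corank $m_0=[(m+1)/4]-1$ suffices topologically while the hypothesis on $n$ keeps $n-m_0$ above $m(m+1)/6$. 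I expect this matching of the topological corank against the analytic rank requirement, rather than any individual computation, to be the main thing to get right.
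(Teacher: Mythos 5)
Your proposal is correct and follows essentially the same route as the paper's own proof: reduce $P$ to $P_1\oplus P_2$ with $P_2$ trivial of rank $n-m_0$, use additivity of $p_1$ to reduce the problem to realising the exact form $\omega-p_1(\nabla_1)$ on the trivial summand, and invoke Proposition~\ref{exact_char}. The only point the paper makes explicit that you pass over quickly is the verification that a connection on the reduced bundle $Q=P_1\oplus P_2\subset P$ extends to a connection on $P$ with the same symplectic Pontrjagin form, but this is a standard fact and does not affect the correctness of your argument.
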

\begin{proof} If $n>m_0$ then $P$ can be
reduced to $P_1\oplus P_2$, where $P_1$ is a principal $Sp(m_0)$
bundle and $P_2$ is the trivial $Sp(n-m_0)$ bundle over $M$ \cite{husemoller}. Moreover, we
have a canonical inclusion $Q=P_1\oplus
P_2\stackrel{i}{\longrightarrow}P$ which takes fibres of
$P_1\oplus P_2$ canonically into the fibres of $P$. It is a standard fact that a connection $\alpha_Q$ on $Q$
can be extended uniquely to a connection $\alpha_P$ on $P$ such
that $i^*\alpha_P=\alpha_Q$. We shall show that
$p_1(\alpha_Q)=p_1(\alpha_P)$. We recall that the Symplectic Pontrjagin 
form $p_1(\alpha_Q)$ is uniquely determined by the
equation
\begin{equation}\pi^*_Q\,p_1(\alpha_Q)=
\mbox{trace\,}(D\alpha_Q)^2,\label{unique}\end{equation} where $D$
stands for the covariant differentiation and $\pi_Q$ denotes the
projection map $Q\longrightarrow M$. Similarly,
$\pi^*_P\,p_1(\alpha_P)=\mbox{trace\,}(D\alpha_P)^2$
(\cite{kobayashi}). Taking the pull back by $i$ we get
$i^*\pi^*_P\,p_1(\alpha_P)=\mbox{trace\,}(D\alpha_Q)^2$. Since
$\pi_P\circ i=\pi_Q$, the left hand side is equal to
$\pi^*_Q\,p_1(\alpha_P)$. Hence by equation (~\ref{unique}) and
the uniqueness property $p_1(\alpha_P)=p_1(\alpha_Q)$. Now,
the first Symplectic Pontrjagin form being additive, if $\alpha_1$ and
$\alpha_2$ are connections on $P_1$ and $P_2$ respectively then
$p_1(\alpha_1\oplus\alpha_2)=p_1(\alpha_1)+p_1(\alpha_2)$. In
view of this observation it is enough to show that every
exact form on $M$ is the first Symplectic Pontrjagin 
form of some connection on the trivial principal $Sp(n-m_0)$-bundle $P_2$.
Now by Proposition~\ref{exact_char}, every exact $4$-form is the first Symplectic Pontrjagin form of some connection on $P_2$ provided $n-m_0>\frac{m(m+1)}{6}$.
\end{proof}

Proof of Theorem 1.1.
\begin{proof} Without loss of generality we may assume that $f_0$
is smooth. Consider the pull-back bundle $f_0^*(\gamma^q_n)$ and
call it $P$. By the naturality of the Symplectic Pontrjagin classes, the
first Symplectic Pontrjagin class of $P$ is $[\sigma]$. Therefore,
by the above theorem, there exists a connection $\alpha$ on
$P$ satisfying $p_1(\alpha)=\sigma$, provided $n\geq m_0+\frac{m(m+1)}{6}$.

On the other hand, if $q\geq n(m+1)(4mn^2+2mn+1)$ then there exists a bundle map
$(F,f):P\lgra \gamma^q_n$ such that $F^*\alpha_0=\alpha$. Then
$p_1(\alpha)=p_1(F^*\alpha_0)=f^*p_1(\alpha_0)$ and hence
$\sigma=f^*p_1(\alpha_0)$. Moreover, since $P$ is isomorphic to
$f^*(\gamma^q_n)$, $f:M\lgra Gr_n(\H^q)$ is homotopic $f_0$. This
proves the theorem.\end{proof}

{\em Acknowledgements}. I would like to thank Misha Gromov for his comment on the symplectic Pontrjagin form. I would also like to thank Amartya Dutta and Sudhir Ghorpade for their suggestions and discussion on the problem.

\end{document}